\newcommand*{\C}{\ensuremath{\mathbb C}\xspace}
\newcommand*{\E}{\ensuremath{\mathcal E}\xspace}
\newcommand*{\Ooo}{\ensuremath{\mathcal O}\xspace}
\newcommand*{\PP}{\ensuremath{\mathbb P}\xspace}
\newcommand*{\Q}{\ensuremath{\mathbb Q}\xspace}
\newcommand*{\T}{\ensuremath{\mathcal T}\xspace}
\newcommand*{\Z}{\ensuremath{\mathbb Z}\xspace}
\DeclareMathOperator{\codim}{codim}
\DeclareMathOperator{\Num}{Num}
\DeclareMathOperator{\Spec}{Spec}
\newtheorem{proposition}{Proposition}[section]
\newtheorem{theorem}[proposition]{Theorem}
\theoremstyle{remark}
\newtheorem{note}[proposition]{Remark}
\theoremstyle{definition}
\newtheorem{definition}[proposition]{Definition}
\author{Serge Lvovski}
\address{National Research University Higher School of Economics,
AG Laboratory, HSE, 7 Vavilova str., Moscow, Russia, 117312}
\email{lvovski@gmail.com}
\title{On surfaces with zero vanishing cycles} \keywords{Vanishing
  cycles, monodromy} 
\subjclass{14D05, 14N99} 
\thanks{The author is
  partially supported by AG Laboratory NRU HSE, RF government grant,
  ag.  11.G34.31.0023}
\begin{document}

\begin{abstract}
We show that using an idea from a paper by Van de Ven one can obtain a
simple proof of Zak's classification of smooth projective surfaces
with zero vanishing cycles. This method of proof allows one to extend
Zak's theorem to the finite characteristic case.
\end{abstract}

\maketitle

\section*{Introduction}

In his paper~\cite{Zak}, Fyodor Zak obtained a complete classification
of smooth projective surfaces over \C for which ``Condition (A)'' from
Expos\'e~XVIII of SGA7 fails to be satisfied (see
Definition~\ref{cond_(A)} below). It is well known (see, for example,
\cite[Section~1]{Lanteri} or \cite[Proposition 6.1]{Lvovski}) that for
surfaces the violation of Condition~(A) is equivalent to triviality of
vanishing cycles or (in characteristic zero) to the emptiness of the
adjoint linear system.

Zak's elegant proof, being based ultimately on the theory of
degeneration of isolated singularities, does not appear to be directly
applicable to the case of finite characteristic.

The aim of this paper is to show that, using an idea from Van de
Ven's article~\cite{VandeVen}, one can produce a simple proof of Zak's
result that is valid over an arbitrary base field.

It would be interesting to learn something about higher-dimensional
varieties \emph{not} satisfying Condition~(A), at least in
characteristic zero to begin with.

Zak's result was reproved later (and independently: Antonio Lanteri
communicated to me in a letter that he and Palleschi were unaware of
Zak's paper while preparing their articles) by Lanteri and Palleschi
\cite[Proposition 3.1]{LanteriPalleschi}. Their proof also depends on
the $\mathrm{char}=0$ assumption. In the paper~\cite{Lanteri1980}
Lanteri uses a construction resembling our proof of Theorem~\ref{main}
to obtain a characterisation of projectively ruled surfaces, but the
condition the author imposes on the vanishing cycles
(see~\cite[Section~3, Condition~(T)]{Lanteri1980}) is much harder to
check than just vanishing.

\subsection*{Acknowledgements}
It is a pleasure to express my gratitude to Fyodor Zak, who introduced
me to this subject many years ago. Besides, Fyodor has carefully read
the first version of this text and made me aware of the related works
of Lanteri and Palleschi, as well as of the paper~\cite{Fary}. I am
grateful to Antonio Lanteri for sending me scans of his papers and for
useful email discussions. Last but not least, I would like to thank
Sergei Tabachnikov for providing me with a copy of the
paper~\cite{Rama}.

\section{The condition (A)}

Suppose that $X\subset\PP^r$ is a smooth projective variety of
dimension $n$ over an algebraically closed field and $Y\subset X$ is
its smooth hyperplane section. For the case in which the embedding
$X\hookrightarrow \PP^r$ is Lefschetz (in characteristic zero every
embedding is Lefschetz, in characteristic $p$ it means that $X$ is
reflexive or the dual variety $X^*\subset(\PP^r)^*$ has codimension
greater than $1$; see Remark~\ref{SGA_vs_modern} below), N.~Katz
introduced in SGA7 the following ``Condition (A)''.
\begin{definition}[N. Katz]\label{cond_(A)}
If $X\subset\PP^r$ is a Lefschetz embedding and $Y\subset X$ is a
smooth hyperplane section, then one says that condition~(A) is
satisfied for this embedding if either $\dim X^*<r-1$, where
$X^*\subset (\PP^r)^*$ is the dual variety, or $\dim X^*=r-1$ and the
homomorphism $i^*\colon H^{n-1}(X)\to H^{n-1}(Y)$ induced by the
embedding $i\colon Y\hookrightarrow X$ is \emph{not} an isomorphism.
\end{definition}
Here, $H^k(Z)$ means $H^k(Z,\Q)$ (singular cohomology) if the base
field is~\C, and $H^k(Z,\Q_\ell)$ ($\ell$-adic cohomology, where
$\ell$ is different from characteristic) in general. Lefschetz
hyperplane theorem asserts that $i^*$ is always injective; if
Condition~(A) is satisfied for at least one smooth hyperplane
section~$Y\subset X$, it is satisfied for any smooth hyperplane
section. Finally, Condition~(A) is always satisfied if $\dim X$ is odd
and $X$ is not a linear subspace. If $\codim X^*=1$, Condition~(A) is
equivalent to the assertion that vanishing cycles with respect to the
generic (equivalently: at least one) Lefschetz pencil corresponding to
the embedding $X\subset\PP^r$ are not zero. See \cite[Expos\'e~XVIII,
  \S\,5.3 and Theorem~6.3]{SGA7.2}.  Katz and Deligne write that
Condition~(A) has a strong tendency to hold (``cette condition (A) a
une nette tendence \`a \^etre v\'erifi\'ee''), so it is interesting to
describe the exceptional cases where it is not satisfied.

Denote by
\[
C(X)=\{(x,t)\in X\times(\PP^r)^*\colon\text{$H_t$ is tangent
to $X$ at $x$} \}
\]
the \emph{conormal variety} of $X$ ($H_t\subset\PP^r$ stands for
the hyperplane in $\PP^r$ corresponding to the point
$t\in(\PP^r)^*$ in the dual projective space).  Recall (see, for
example, \cite[Theorem 1.1]{Kaji}) that a projective variety is called
\emph{reflexive} if the projection $C(X)\to X^*\subset(\PP^r)^*$ is
separable (so, in characteristic zero everything is
reflexive).

\begin{note}\label{SGA_vs_modern}
The definition of Lefschetz embeddings in SGA7
(see~\cite[Expos\'e~XVII, Definitions 2.2 and 2.3]{SGA7.2}) is
equivalent to the following. An embedding $X\subset\PP^r$, where $X$
is smooth, is Lefschetz if for a general line $L\subset(\PP^r)^*$ the
following conditions are satisfied.

(a) The $(r-2)$-dimensional linear subspace ${}^\perp L\subset\PP^r$
corresponding to the line $L\subset(\PP^r)^*$, is transversal to~$X$.

(b) There exists a non-empty Zariski open subset $U\subset L$ such
that for each $t\in U$ the hyperplane $H_t$ is transversal to $X$.

(c) If $t_0\in L$ is such that $H_t$ is not transversal to $X$, then
the scheme $H_{t_0}\cap X$ is a reduced variety with only one singular
point, and this point is the ordinary quadratic singularity.

Here, condition~(a) follows from a simple version of Bertini theorem,
condition~(b) means that $L$ is not contained in $X^*$, and
condition (c) is satisfied if either $\dim X^*\le r-2$ or $\dim
X^*=r-1$ and the set of points $t\in X^*$ over which derivative of the
projection morphism $q\colon C(X)\to X$ has maximal rank everywhere
is non-empty. The latter condition is equivalent to the separability
of the morphism~$q$.
\end{note}

It is well known that if $X$ is a smooth reflexive surface and not a
linear subspace, then $X^*$ is a hypersurface. See, for example,
\cite[Proposition~1]{Zak}, where the proof is valid in arbitrary
characteristic provided that $X$ is reflexive, or Landman's ``parity
theorem'' \cite[Theorem II(21)]{Kleiman}. Thus, in the two-dimensional
case the condition ``$X\hookrightarrow\PP^r$ is a Lefschetz embedding'' is
equivalent to the reflexivity of $X$. Note also that in the definition
of Condition~(A) the ambient projective space can be safely replaced
by the linear span of $X$, so in the sequel we may and will assume
that $X$ is not contained in a hyperplane.

It is worth mentioning that the non-triviality of Condition~(A) had
already been observed in the pre-Grothendieck epoch. At least, this
``exceptional case'' is mentioned explicitly in the paper~\cite{Fary}
(see the note at the end of p.~37), where the author indicates that
the existence of embedded varieties for which Condition~(A) does not
hold had been known to J.~Leray.

\section{Two auxiliary results}

In this section we state two well-known folklore results about
projective surfaces. For the sake of completeness, we sketch the proofs.

\begin{definition}
Let us say that a smooth projective surface $X\subset\PP^r$ is
\emph{projectively ruled} if $X$ is swept by a $1$-dimensional family
of disjoint lines.
\end{definition}

\begin{proposition}\label{prop:ruled}
If a smooth projective surface $X\subset\PP^r$ contains a line $L$
with self-intersection index $(L,L)=0$, then $X$ is projectively
ruled. If $X$ is projectively ruled, then there exists a smooth
projective curve $C$ and a locally free sheaf \E on $C$ of rank $2$
such that $X\cong \PP(E)$ and $\Ooo_X(1)\cong \Ooo_{X|C}(1)$. 
\end{proposition}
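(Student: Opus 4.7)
For the first assertion, the plan is to exhibit $L$ as a member of a one-parameter family of pairwise disjoint lines that cover $X$. Adjunction on $L\cong\PP^1$ gives $L\cdot K_X=-2$, so the normal bundle $N_{L/X}$ has degree $L^2=0$ on $\PP^1$ and is therefore trivial. Consequently $h^0(N_{L/X})=1$ and $h^1(N_{L/X})=0$, so the Hilbert scheme of $X$ is smooth of dimension~$1$ at $[L]$; let $B$ be the connected component of the Hilbert scheme containing $[L]$, a smooth projective curve. Every point $[L']\in B$ parametrizes a line, since any subscheme of $\PP^r$ with the same Hilbert polynomial as $L$ is a line, and $L'$ is numerically equivalent to $L$, so $L'\cdot L=0$; for two distinct irreducible effective curves this forces $L\cap L'=\emptyset$, and the same argument applies to any pair of distinct members of the family. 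The universal family $\Gamma\subset X\times B$ is proper of dimension~$2$, and the first projection $\Gamma\to X$ is set-theoretically injective (otherwise two family members would meet), so by properness its image is closed of dimension~$2$, hence equals $X$. This proves projective ruledness.

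For the second assertion, I would first promote $\Gamma\to X$ to an isomorphism. Since $\Gamma$ is a flat $\PP^1$-bundle over the smooth curve $B$, it is smooth of dimension~$2$. Its differential at $(x,[L])$ fits into the relative tangent sequence $0\to T_xL\to T_{(x,[L])}\Gamma\to T_{[L]}B\to 0$; the induced map $T_{[L]}B\to T_xX/T_xL=N_{L/X,x}$ is the evaluation $H^0(N_{L/X})\to N_{L/X,x}$ of the trivial bundle, hence an isomorphism. So the differential of $\Gamma\to X$ is bijective at every point, $\Gamma\to X$ is \'etale, and being proper and bijective it is an isomorphism. Composing its inverse with $\Gamma\to B$ gives the desired morphism $f\colon X\to C:=B$ whose fibres are the lines of the family. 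Setting $\E:=f_*\Ooo_X(1)$, the restriction $\Ooo_X(1)|_F\cong\Ooo_{\PP^1}(1)$ on each fibre makes $\E$ locally free of rank~$2$; the adjunction map $f^*\E\to\Ooo_X(1)$ is a fibrewise surjection, so the induced $C$-morphism $X\to\PP(\E)$ is fibrewise an isomorphism, hence an isomorphism, identifying $\Ooo_X(1)$ with $\Ooo_{\PP(\E)}(1)$.

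The main obstacle I expect is bundling together the geometric content of the first part --- disjointness of family members, the covering property, and, especially in positive characteristic (where a proper bijection between smooth varieties need not be an isomorphism), the \'etaleness of $\Gamma\to X$. Each ingredient follows from the triviality of $N_{L/X}$ together with elementary intersection theory, but the complete package requires slightly more than the bare existence of first-order deformations.
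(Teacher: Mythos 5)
Your proof is correct and follows essentially the same route as the paper's sketch: triviality of $N_{L/X}$ makes the Hilbert-scheme component a smooth projective curve, numerical equivalence forces disjointness of the family members, and the tangent-space computation (the evaluation map $H^0(N_{L/X})\to N_{L/X,x}$ being an isomorphism) shows the projection from the universal family to $X$ is an isomorphism, after which one sets $\E=f_*\Ooo_X(1)$. Your version is in fact slightly more careful on the points the paper leaves implicit (\'etaleness versus separability plus generic injectivity, and the identification $X\cong\PP(\E)$), but it is the same argument.
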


\begin{proof}[Sketch of proof]
It is easy to see that the result of any flat deformation of a line
$L\subset X$ is again a line. If $C$ is the connected component of the
Hilbert scheme of lines on $X$ which (the component) contains the
point corresponding to $L$, then for any deformation $L'$ of
the line $L\subset X$ one has $(L',L')=0$, whence $h^0(N_{X|L'})=1$,
$h^i(N_{X|L'})=0$ for $i>0$, so $C$ is a smooth projective curve. If
$\pi\colon T\to C$ is the family of lines on~$X$ corresponding to $C$ and
$p\colon T\to X$ is the canonical projection, then it is easy to see
that $p$ is separable. Indeed, if $y\in T$ is a closed point and
$L=p(\pi^{-1}(y))\subset X$, then the
restriction of derivative of $p$ to the tangent space $\T_yT$ is
isomorphic onto its image. If $z=\pi(y)\in C$, then in the commutative
diagram
\[
\xymatrix{
{\T_yT/\T_y\pi^{-1}(z)}\ar[r]\ar[d]&{H^0(L,N_{X|L})}\\
{\T_zC}\ar[ur]
}
\]
both the vertical and the diagonal arrow are isomorphisms, whence the
horizontal arrow is also an isomorphism, so the mapping
$\T_yT\to\T_{p(y)}X$ is non-degenerate. Moreover, $p$ is generically
one to one since self-intersection index of each line in the family
is~$0$. Since $X$ is smooth, it follows that $p\colon T\to X$ is an
isomorphism. Identifying $X$ with $T$, it suffices to put
$\E=\pi_*\Ooo_X(1)$.
\end{proof}

\begin{proposition}\label{prop:plane}
Suppose that $X$ is a smooth projective surface and $C\subset X$ is a
curve such that $C\cong \PP^1$, the self-intersection index $(C,C)$
equals~$1$, and $C\subset X$ is an ample divisor. Then $X\cong\PP^2$.
\end{proposition}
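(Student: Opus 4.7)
The plan is to produce the morphism $\phi\colon X\to\PP^2$ defined by the complete linear system $|C|$ and to show that it is an isomorphism. First, the adjunction formula applied to $C\cong\PP^1$ with $(C,C)=1$ gives $K_X\cdot C=-3$. Since $C$ is ample and $K_X\cdot C<0$, no positive multiple of $K_X$ is effective, so $h^2(\Ooo_X)=h^0(K_X)=0$ and $X$ has Kodaira dimension $-\infty$. By the Enriques--Kodaira classification, valid in arbitrary characteristic, a non-rational surface of Kodaira dimension $-\infty$ admits a ruling over a smooth curve of genus $\ge 1$; any smooth rational curve on such a surface is forced into a fibre and hence has non-positive self-intersection, contradicting $(C,C)=1$. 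So $X$ is rational and $h^1(\Ooo_X)=0$.

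Next, I would use Riemann--Roch to get $\chi(\Ooo_X(C))=1+(C^2-K_X\cdot C)/2=3$. The higher cohomology vanishes: $h^2(\Ooo_X(C))=h^0(K_X-C)=0$ because $(K_X-C)\cdot C=-4<0$ while $C$ is ample, and $h^1(\Ooo_X(C))=0$ follows from the exact sequence
\[
0\to\Ooo_X\to\Ooo_X(C)\to\Ooo_C(C|_C)\to0,
\]
together with $\Ooo_C(C|_C)\cong\Ooo_{\PP^1}(1)$ and $h^1(\Ooo_X)=0$. So $|C|$ is a net. The same sequence shows that the restriction $H^0(X,\Ooo_X(C))\to H^0(C,\Ooo_{\PP^1}(1))$ is surjective, so the trace of $|C|$ on $C$ is the complete base-point-free system $|\Ooo_{\PP^1}(1)|$. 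Since $C\in|C|$, any base point of $|C|$ would have to lie on $C$, and the surjectivity rules this out. Hence $|C|$ is base-point-free and defines a morphism $\phi\colon X\to\PP^2$.

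Finally, $(C,C)=1$ forces $\deg\phi\cdot\deg\phi(X)\le 1$. The image $\phi(X)$ cannot be a curve, because otherwise every member of $|C|$ would decompose as a union of fibres of $\phi$, contradicting the irreducibility of $C\in|C|$. Thus $\phi$ is a birational morphism onto $\PP^2$. If $\phi$ contracted any curve $E\subset X$, then $E\cdot C=E\cdot\phi^*(\text{line})=0$, contradicting the ampleness of $C$. So $\phi$ is finite and birational between smooth projective surfaces, hence an isomorphism by Zariski's main theorem. The step most in need of care is establishing rationality of $X$; if one prefers to avoid invoking the classification of surfaces, an alternative is to apply Ramanujam's vanishing theorem to get $H^1(X,\Ooo_X(-C))=0$, which via the standard sequence yields $H^1(\Ooo_X)=0$ directly from the ampleness of $C$.
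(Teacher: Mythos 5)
Your proof is correct, and its second half (the exact sequence $0\to\Ooo_X\to\Ooo_X(C)\to\Ooo_C(C)\to0$ giving $h^0(\Ooo_X(C))=3$ and base-point-freeness, then ampleness plus $(C,C)=1$ giving an isomorphism onto $\PP^2$) is exactly the paper's argument, worked out in more detail than the paper bothers to give. Where you genuinely diverge is the key input $H^1(X,\Ooo_X)=0$: the paper gets it in one line from Ramanujam's lemma that for an irreducible curve $C$ with $(C,C)>0$ the restriction map $H^1(X,\Ooo_X)\to H^1(C,\Ooo_C)$ is injective (and $H^1(\PP^1,\Ooo_{\PP^1})=0$), whereas you deduce $\kappa(X)=-\infty$ from $K_X\cdot C=-3$ and then invoke the Enriques--Bombieri--Mumford classification to rule out irrational ruled surfaces via Zariski's lemma on fibre components. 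Your route works in every characteristic, but it leans on a much heavier theorem than the problem requires; the paper's route is more economical and is the one that scales to the char~$p$ setting this paper cares about. One caution about your closing remark: the \emph{full} Ramanujam vanishing $H^1(X,\Ooo_X(-C))=0$ is a characteristic-zero statement (Kodaira-type vanishing fails in positive characteristic), so as a replacement for the classification argument in this paper's generality you should invoke only the weaker, characteristic-free injectivity statement that the paper actually uses, not the vanishing theorem itself.
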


\begin{proof}[Sketch of proof]
Any flat deformation $C'\subset X$ of the curve $C\subset X$ is
isomorphic to $\PP^1$. Indeed, $\chi(\Ooo_{C'})=1$ and $C'$ is
irreducible since $(C',C)=1$ and $C$ is ample. Hence,
$h^0(N_{X|C'})=2$, $h^i(N_{X|C'})=0$ for $i>0$, so if $B$ is the
connected component of the Hilbert scheme of curves on $X$ which (the
component) contains the point corresponding to $C$, then $B$ is a
(smooth) projective surface. If
\begin{equation*}
  \xymatrix{
    T\ar[d]_p\ar[r]^q &X\\
 B
  }    
\end{equation*}
is the standard diagram representing the family of curves on $X$
parametrized by~$B$, then, for a general (closed) point $x\in X$, one
has $\dim q^{-1}(x)=1$.

Let $\sigma\colon \tilde X\to X$ be the blowup of $X$ at~$x$. Proper
transforms (with respect to $\sigma$) of the curves from the
family~$B$ passing through~$x$, are isomorphic to $\PP^1$ and have
zero self-intersection. Arguing as in the proof of
Proposition~\ref{prop:ruled}, we conclude that $\tilde X$ admits a
morphism $\pi\colon \tilde X\to C$ onto a smooth curve $C$ such that
the fibers of~$\pi$ are the above-mentioned proper transforms, all
isomorphic to~$\PP^1$. Restricting $\pi$ to
the exceptional curve $E=\sigma^{-1}(x)\subset\tilde X$, one concludes
that there exists a surjective morphism $E\to C$, whence $C\cong
\PP^1$ by L\"uroth's theorem. Besides, $E$ is a section of the
morphism~$\pi$, so $\tilde X$ is a $\PP^1$-bundle
over $\PP^1\cong C$, so $X\cong\PP(\Ooo_{\PP^1}\oplus
\Ooo_{\PP^1}(d))$ for some $d\ge 0$. Since this $\PP^1$-bundle has a
section~$E$ with self-intersection equal to~$-1$, one concludes that
$d=1$; the blowdown of such a section of $\PP(\Ooo_{\PP^1}\oplus
\Ooo_{\PP^1}(1))$ is isomoprhic to $\PP^2$, and we are done.
\end{proof}

\section{Statement and proof}

The following theorem was first proved by Zak~\cite{Zak} over~\C.

\begin{theorem}\label{main}
Suppose that $X\subset\PP^r$ is a smooth reflexive surface not lying
in a hyperplane. Then $X$ fails to satisfy Condition~\textup{(A)} if
and only if one of the following conditions holds.

\textup{(i)} $X=\PP^2$.

\textup{(ii)} $X$ is projectively ruled.

\textup{(iii)} $X=v_2(\PP^2)\subset\PP^5$ \textup(the second Veronese image
of $\PP^2$\textup).

\textup{(iv)} $X\subset\PP^4$ is an isomorphic projection of the
surface $v_2(\PP^2)\subset\PP^5$.
\end{theorem}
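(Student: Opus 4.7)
The plan is to split the theorem into the easy direction (direct verification in each case) and the converse, where I will apply Picard--Lefschetz theory to a general Lefschetz pencil, use the vanishing of all vanishing cycles to force every general singular hyperplane section to split, and then read off the structure of $X$ from the two components by invoking the folklore propositions of Section~2.

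For the ``if'' direction I verify case by case that $i^*\colon H^1(X)\to H^1(Y)$ is an isomorphism: in (i), (iii), (iv) both $H^1(X)$ and $H^1(Y)$ vanish, since the smooth hyperplane sections are respectively lines or rational normal quartics; in (ii), with $X=\PP(\E)\to C$, Leray gives $H^1(X)\cong H^1(C)$, and a smooth hyperplane section meets each ruling fibre in a single point, hence projects isomorphically to $C$. One also has to confirm that $X^*$ has the expected dimension in each case (in (i), $X^*=\emptyset$, which matches the first clause of Condition~(A)).

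For the converse, suppose (A) fails. If $\dim X^*<r-1$, the result quoted just before Remark~\ref{SGA_vs_modern} forces $X$ to be a linear subspace, i.e.\ $X=\PP^2$ (case (i)). So assume $\dim X^*=r-1$; then $i^*$ is an isomorphism and every vanishing cycle vanishes. For a general $t_0\in X^*$ the singular section $Y_{t_0}=H_{t_0}\cap X$ has a single ordinary node, and the associated vanishing class $\delta$ (in $H_1(Y)$ over \C, or its $\ell$-adic analogue in general) is zero. The Picard--Lefschetz dichotomy then says that $\delta=0$ is equivalent to $\delta$ being a separating cycle on $Y$, which in turn is equivalent to $Y_{t_0}$ being reducible as $C_1+C_2$ with $C_1,C_2$ smooth irreducible curves meeting transversely at a single point.

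Given such a decomposition with $C_1\cdot C_2=1$, I then extract the structure of $X$ from the self-intersections. Since $Y$ is ample and $C_i\cdot Y\geq 1$, the identity $C_i^2=C_i\cdot Y-1$ gives $C_i^2\geq 0$. If some $C_i^2=0$, then $C_i\cdot Y=1$, so $C_i$ is a line in $\PP^r$ with self-intersection zero and Proposition~\ref{prop:ruled} yields case (ii). Otherwise $C_1^2,C_2^2\geq 1$; testing the $Y$-orthogonal class $C_1-C_2$ against Hodge index forces $C_1^2=C_2^2=1$ and $C_1\equiv C_2$ numerically, so $2C_1\equiv Y$ is ample and Nakai--Moishezon makes $C_1$ ample. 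Since $C_1\cdot Y=2$, $C_1$ is a smooth conic, hence $C_1\cong\PP^1$, and Proposition~\ref{prop:plane} gives $X\cong\PP^2$; then $Y\sim 2C_1$ identifies $\Ooo_X(1)$ with $\Ooo_{\PP^2}(2)$, so $X$ is $v_2(\PP^2)\subset\PP^5$ or its isomorphic projection to $\PP^4$ (cases (iii) and (iv)). The main obstacle I anticipate is the Picard--Lefschetz input in arbitrary characteristic: one must invoke the $\ell$-adic formalism of SGA~7 to match the triviality of the vanishing cycle class with the reducibility of the singular fibre. Once that topological/cohomological step is secured, the rest is elementary intersection theory on surfaces together with the two lemmas of Section~2.
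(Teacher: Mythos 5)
Your proposal is correct and, for the hard (``only if'') direction, follows essentially the same route as the paper: reduce via Picard--Lefschetz (the $\ell$-adic exact sequence of SGA~7, which is exactly the content of the paper's Proposition~\ref{prop:reducible}) to the existence of a nodal hyperplane section $C_1+C_2$ with $C_1\cdot C_2=1$, derive $C_i^2\ge 0$ from ampleness, and then split into the ruled case ($C_i^2=0$, so $C_i$ is a line, Proposition~\ref{prop:ruled}) and the Veronese case ($C_1^2=C_2^2=1$, $C_1\equiv C_2$, Proposition~\ref{prop:plane}); your dichotomy on whether some $C_i^2$ vanishes even absorbs the paper's separate quadric subcase. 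Two small points to repair: the class $C_1-C_2$ is not $Y$-orthogonal a priori, since $(C_1-C_2)\cdot Y=C_1^2-C_2^2$, so you should instead invoke the index theorem in the form $(C_1\cdot C_2)^2\ge C_1^2C_2^2$ (with equality iff the classes are proportional), which immediately gives $C_1^2=C_2^2=1$ and $C_1\equiv C_2$; and in the ``if'' direction you verify $i^*\colon H^1(X)\to H^1(Y)$ is an isomorphism by direct computation (projective bundle formula for $\PP(\E)$, rationality of $Y$ for the Veronese), whereas the paper instead exhibits a reducible nodal section and applies the (ii)$\Rightarrow$(i) implication of Proposition~\ref{prop:reducible} --- both are valid, though yours requires separately confirming $\dim X^*=r-1$, and your parenthetical about $X^*=\emptyset$ ``matching the first clause'' for $\PP^2$ is backwards (that clause would make Condition~(A) \emph{hold}; the plane is a degenerate case the paper also disposes of by fiat).
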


Since it is clear that Condition~(A) is not satisfied for the plane
$\PP^2\subset\PP^r$, from now on we assume that $X$ is not a linear
subspace of $\PP^r$.

Proof of the theorem is based on the following observation.

\begin{proposition}\label{prop:reducible}
Suppose that $X\subset\PP^r$ is a smooth projective reflexive surface
and not a linear subspace of $\PP^r$. Then the following two
conditions are equivalent.

\textup{(i)} Condition~\textup{(A)} fails for the embedding
$X\hookrightarrow\PP^r$.

\textup{(ii)} There exists a hyperplane $H\subset\PP^r$ for which
$H\cap X$ is reduced, reducible, and smooth except for one ordinary
quadratic singularity.

Moreover, if \textup{(ii)} holds, then $H\cap X=Y_1\cup Y_2$, where
$Y_1$ and $Y_2$ are smooth irreducible curves intersecting
transversally at one point, and this is also the case for any hyperplane
$H'\subset\PP^r$ for which $H'\cap X$ is smooth except for one
ordinary quadratic singularity.
\end{proposition}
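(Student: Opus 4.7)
The plan is to translate Condition~(A) into Picard--Lefschetz language, and then match the vanishing of a single vanishing cycle to the reducibility of the corresponding nodal hyperplane section. First, since $X$ is reflexive and not contained in a hyperplane, the dual variety $X^*$ is a hypersurface, so $\dim X^*=r-1$ and Condition~(A) is the assertion that $i^*\colon H^1(X)\to H^1(Y)$ is an isomorphism for a smooth hyperplane section~$Y$. By the results of Expos\'e~XVIII of SGA7 recalled in Section~1, failure of Condition~(A) is equivalent to triviality of \emph{all} vanishing cycles in some (equivalently, any) Lefschetz pencil associated with the embedding; and since monodromy acts transitively, up to sign, on the set of vanishing cycles of such a pencil, this is in turn equivalent to just one vanishing cycle being zero.

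The core geometric input is the following: for a reduced hyperplane section $Y_0=H\cap X$ with exactly one ordinary quadratic singularity, the vanishing cycle in a nearby smooth section~$Y$ is zero in $H^1(Y)$ if and only if $Y_0$ is reducible. Over~\C this is transparent topologically: the vanishing cycle is the shrinking $S^1$ at the node, and normalising~$Y_0$ separates the two smooth branches there, so the class becomes a separating (hence null-homologous) loop in the smooth curve~$Y$ precisely when the normalisation is disconnected, i.e.\ when $Y_0$ is reducible. In arbitrary characteristic the same conclusion follows from the standard local description of the vanishing cycle sheaf at an ordinary double point (SGA7).

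Granting this dictionary, both implications are formal. For (ii)~$\Rightarrow$~(i), choose a general line $L\subset(\PP^r)^*$ through the point $t_0\in X^*$ corresponding to the hyperplane $H$ from~(ii): by reflexivity and Remark~\ref{SGA_vs_modern} this yields a Lefschetz pencil whose fibre at $t_0$ is exactly $Y_0$, its vanishing cycle is zero, so Condition~(A) fails. For (i)~$\Rightarrow$~(ii), take any Lefschetz pencil (its singular fibres exist because $X^*$ is a hypersurface); all vanishing cycles are zero, so each singular fibre is a reduced reducible hyperplane section with exactly one ordinary quadratic singularity. The final sentence of the proposition (``this is also the case for any $H'$'') is the same observation applied to an arbitrary such~$H'$.

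Finally, the geometric refinement in the ``moreover'' clause is a local statement at the node: an ordinary quadratic singularity is analytically isomorphic to $xy=0$ and carries exactly two smooth transversal branches, so a reducible~$Y_0$ has precisely two global components through the node, each contributing one branch; each such component is smooth away from the node (because $Y_0$ itself is) as well as at the node (because it contributes a single smooth branch), and the two components therefore meet transversally at the unique singular point. The step that needs the most care is the vanishing-cycle/reducibility equivalence in the $\ell$-adic setting: over~\C it reduces to a one-line topological observation, whereas in positive characteristic it requires appealing to the local structure of $R\Psi\Q_\ell$ at an ordinary quadratic degeneration.
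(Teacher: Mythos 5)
Your overall strategy is the same as the paper's: reduce Condition~(A) to the vanishing of a single vanishing cycle (using that $X^*$ is a hypersurface and that the monodromy conjugates all vanishing cycles), and then show that the cycle attached to a nodal hyperplane section vanishes if and only if that section is reducible. Over \C your argument is a legitimate and slightly more geometric variant of the paper's: you use the fact that a simple closed curve on a Riemann surface is null-homologous exactly when it separates, and that collapsing it yields a curve with disconnected normalisation; the paper instead runs the cohomology exact sequence of the pair $(Y',c)$ and reads off $b_2(Y_0)=2$. Both are complete, and both implicitly or explicitly need the connectedness of hyperplane sections (which the paper states up front) to conclude that ``reducible with one node'' means exactly two smooth components meeting transversally at one point --- your ``moreover'' paragraph tacitly assumes every component passes through the node, which is where connectedness enters.

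The one substantive weakness is the positive-characteristic case, which you acknowledge but do not carry out: ``the standard local description of the vanishing cycle sheaf at an ordinary double point'' is not by itself enough, because the statement you need is global (reducibility of $Y_0$), not local at the node. The paper's proof supplies the missing chain: from SGA7, Expos\'e XV, Theorem 3.4 one gets the specialisation exact sequence
\[
0\to H^1(Y_0,\Q_\ell)\to H^1(Y_{\bar\eta},\Q_\ell)\xrightarrow{(\cdot,\delta)}\Q_\ell\to H^2(Y_0,\Q_\ell)\to H^2(Y_{\bar\eta},\Q_\ell)\to 0,
\]
and then Poincar\'e duality on the smooth connected curve $Y_{\bar\eta}$ shows that $\delta=0$ if and only if the pairing map is zero, if and only if $b_2(Y_0)=b_2(Y_{\bar\eta})+1=2$, i.e.\ $Y_0$ has two components. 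Without some such argument (exact sequence plus nondegeneracy of the cup-product pairing on $H^1$ of the nearby fibre), your $\ell$-adic step is an assertion rather than a proof; with it, your write-up matches the paper's.
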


\begin{proof}[Proof of the proposition]
To begin with, recall that \emph{any} hyperplane section $Y\subset X$
is connected (see, for example,~\cite[Corollary~III.7.9]{Hartshorne}).
Choose a hyperplane $H\subset\PP^r$ for which $Y=H\cap X$ is smooth
except for one ordinary quadratic singularity. The hyperplane $H$ can
be included in a Lefschetz pencil $L\subset(\PP^r)^*$ (see
\cite[Expos\'e~XVII, Definition 2.2]{SGA7.2}; see also
\cite[\S\,1.6]{Lamotke} for the case of varieties over~\C). If $\tilde
X$ is the blow-up of $X$ at the finite set ${}^\perp L\cap X$ (where
${}^\perp L\subset\PP^r$ is the linear space of codimension $2$
corresponding to~$L\subset(\PP^r)^*$), then this Lefschetz pencil is a
morphism $\pi\colon\tilde X\to L$ such that for each (closed) point
$t\in L$, its fiber over $t$ is isomorphic to $X\cap H_t$. Recall some
basic facts from Picard--Lefschetz theory.

If the base field is \C and $Y_0=X\cap H_t$ has an ordinary quadratic
singularity, then for all $t'$ close enough to $t$ the intersection
$Y'=X\cap H_{t'}$ is smooth and contains an
embedded circle $c\subset Y'$ (``vanishing cycle'') such that $Y_0$ is
homeomorphic to $Y'/c$ and the class $\delta$ of $c$ in $H^1(Y',\Q)$
equals zero if and only if $b_1(X)=b_1(Y)$. Thus, Condition (A) fails
if and only if $c=0$; now it follows from the cohomology exact sequence
\begin{equation}\label{eq:classical}
H^1(Y',\Z)\to H^1(c,\Z)\to H^2(Y_0,\Z)\to H^2(Y',\Z)\to H^2(c,\Z),
\end{equation}
in which the leftmost arrow is zero since the class of $c$ is zero,
that $b_2(Y_0)=2$, whence $Y_0=Y_1\cup Y_2$ is union of two smooth
components intersecting transversally at one point. If, on the other
hand, Condition~(A) is satisfied, then the leftmost arrow
in~\eqref{eq:classical} is injective, whence $b_2(Y_0)=b_2(Y)=1$ and $Y_0$
is irreducible.

In arbitrary characteristic the same argument requires a slightly
different wording. As usual, $\ell$ will denote a prime different from
the characteristic; since the base field is algebraically closed, we
may and will identify $H^*(\cdot,\Q_\ell(j))$ with
$H^*(\cdot,\Q_\ell)$. If a point $t\in L$ is such that $X\cap H_t$ has
an ordinary quadratic singularity, put, according to SGA7,
$A=\widehat{\Ooo_{L,t}}$ (completion of the local ring) and $S=\Spec A$; by
$\bar\eta$ denote $\Spec$ of the algebraic closure of the field of
fractions of~$A$. If $Y_0=H_t\cap X$, $\hat\pi\colon\hat X\to S$ is
the pullback of the morphism $\pi\colon\tilde X\to S$ with respect to the
morphism $S\to L$, and $Y_{\bar\eta}$ is the general geometric fiber
of $\hat\pi$, then there exists a class $\delta\in
H^1(Y_{\bar\eta},\Q_\ell)$ (the vanishing cycle) and an exact sequence
\begin{equation}\label{exseq}
0\to H^1(Y_0,\Q_\ell)\to
H^1(Y_{\bar\eta},\Q_\ell)\xrightarrow{(\cdot,\delta)}
\Q_\ell\to H^2(Y_0,\Q_\ell)\to
H^2(Y_{\bar\eta},\Q_\ell)\to 0
\end{equation}
(see \cite[Expos\'e XV, Theorem 3.4]{SGA7.2} or
\cite[4.3.3]{Deligne1}). Now the following conditions are equivalent.

(a) $\delta=0$.

(b) The arrow $H^1(Y_{\bar\eta},\Q_\ell)\to\Q_\ell$ in~\eqref{exseq} is
zero.

(c) $b_2(Y_s)=b_2(Y_{\bar\eta})+1$.

(d) Condition~(A) fails for the embedding $X\subset\PP^r$.

Indeed, since $Y_{\bar\eta}$ is a smooth and connected projective
curve, Poincar\'e duality shows that $\mathrm{(a)}\Leftrightarrow
\mathrm{(b)}$, the equivalence $\mathrm{(a)}\Leftrightarrow
\mathrm{(c)}$ follows from~\eqref{exseq}, and the equivalence
$\mathrm{(a)}\Leftrightarrow \mathrm{(d)}$ follows from main results
of Picard--Lefschetz theory (cokernel of the injection
$H^1(X,\Q_\ell)\to H_1(X\cap H,\Q_\ell)$ is generated by ``the''
vanishing cycles and all the vanishing cycles are conjugate). Thus,
Condition~(A) fails if and only if $b_2(Y_0)=2$, so the curve $Y_0$
has two irreducible components; since the only singular point of this
curve is ordinary quadratic, these components intersect transversally
at one point. If, on the other hand, $\delta\ne0$, then the exact
sequence~\eqref{exseq} shows that $b_2(Y_s)=b_2(Y_{\bar\eta})=1$, so
$Y_s$ is irreducible.
\end{proof}

Now we pass to the proof of Theorem~\ref{main}. 

\begin{proof}[Proof of the ``if'' part of Theorem~\ref{main}]
We are to check that Condition~(A) fails for projectively ruled
surfaces, the Veronese surface, and its projection. If $X\subset\PP^r$ is
projectively ruled, $p\in X$, and $L$ is the line of the ruling
passing through $p$, then $L$ is contained in the embedded tangent
space $T_pX\subset\PP^r$, so each hyperplane $H$ that is tangent to
$X$ at $p$ must contain~$L$. Thus, if $H\cap L$ has only an ordinary
quadratic singularity at $p$, then the curve $H\cap X$ contains $L$,
and since $L$ has zero self-intersection,  $H\cap X$ must have other
components, and Proposition~\ref{prop:reducible} shows that
Condition~(A) fails.

If $X$ is the Veronese surface or its projection, observe that if
$D\in|\Ooo_{\PP^2}(2)|$ is a curve with one singularity, then $D$ is
union of two different lines, so $D$ is reducible and
Proposition~\ref{prop:reducible} completes the proof again.
\end{proof}

\begin{proof}[Proof of the ``only if'' part of Theorem~\ref{main}]
The main idea of this proof is borrowed from Van de Ven's
paper~\cite{VandeVen} (see proof of Theorem~I therein).

Suppose that Condition (A) fails for a smooth surface $X\subset\PP^r$,
where $X$ is not a linear space. Proposition~\ref{prop:reducible}
implies that $X$ has a hyperplane section of the form $Y_1+Y_2$, where
$Y_1$ and $Y_2$ are smooth, irreducible, and intersect transversally
at one point.  Since $Y_1+Y_2$ is a hyperplane section of~$X$, one
has, for $j=1$ or~$2$,
\begin{equation}\label{eq:pos}
(Y_j,Y_1+Y_2)=\deg Y_j>0;
\end{equation}
observing that $(Y_1,Y_2)=1$ (since $Y_1$ and $Y_2$ intersect
transversally at one point), one concludes from~\eqref{eq:pos} that
$(Y_j,Y_j)>-1$, so 
\begin{equation}\label{both_non-neg}
(Y_1,Y_1)\ge0,\quad (Y_2,Y_2)\ge0.
\end{equation}

Denote by $V\subset \Num(X)\otimes\Q$, where $\Num(X)$ is the group of
divisors on $X$ modulo numeric equivalence, the subspace generated by
the classes of $Y_1$ and $Y_2$. Since the curve
$Y_1+Y_2$ has positive self-intersection,
it follows from Hodge index theorem that only the following two cases
are possible:

(a) $\dim V=2$ and
\begin{equation}\label{eq:Hodge}
\begin{vmatrix}
(Y_1,Y_1)&(Y_1,Y_2)\\
(Y_1,Y_2)&(Y_2,Y_2)
\end{vmatrix}
<0;
\end{equation}

(b) $\dim V=1$ and classes of $Y_1$ and $Y_2$ are proportional.

In case~(a), inequality~\eqref{eq:Hodge} implies that
\[
(Y_1,Y_1)(Y_2,Y_2)<1,
\]
so it follows from \eqref{both_non-neg} that at least one of the
self-intersection indices $(Y_1,Y_1)$ or $(Y_2,Y_2)$ must be zero. If
one of them (say, $(Y_1,Y_1)$) equals zero and the other equals~$1$,
then $\deg Y_1=(Y_1,Y_1+Y_2)=1$, so $Y_1$ is a line with
self-intersection index~$0$, and Proposition~\ref{prop:ruled} shows
that $X$ is projectively ruled. If they both are zero, then $\deg
X=(Y_1+Y_2,Y_1+Y_2)=2$, so $X$ is a quadric, and the smooth
two-dimensional quadric is projectively ruled.

In case~(b), $Y_2$ is numerically equivalent to $rY_1$, where $r$ must
be positive since both $Y_1$ and $Y_2$ are effective divisors. Since
\begin{equation}\label{eq:Y^2}
1=(Y_1,Y_2)=r(Y_1,Y_1)=r^{-1}(Y_2,Y_2)
\end{equation}
and both $(Y_1,Y_1)$ and $(Y_2,Y_2)$ are integers, it follows that
$r=(Y_1,Y_1)=(Y_2,Y_2)=1$ and $Y_1\approx Y_2$, where $\approx$ means
numeric equivalence. Since $Y_1+Y_2$ is an ample divisor, it follows
that $Y_1$ is ample. Now \eqref{eq:Y^2} implies that $\deg
Y_1=(Y_1,Y_1)+(Y_1,Y_2)=2$, so $Y_1$ is a conic, whence
$Y\cong\PP^1$. Proposition~\ref{prop:plane} implies that $X$ is
isomorphic to $\PP^2$, and this isomorphism takes $Y_1$ and $Y_2$ to
lines since self-intersection indices of these curves equal~$1$. Thus,
$(X,\Ooo_X(1))\cong (\PP^2,\Ooo_{\PP^2}(2))$, whence $X$ is projectively
isomorphic to $v_2(\PP^2)\subset\PP^5$ or to an isomorphic projection
of this surface. It is well known that secant variety of $v_2(\PP^2)$
has dimension~$4$, so an isomorphic projection of $v_2(\PP^2)$ must
lie in $\PP^4$. This completes the proof.
\end{proof}

Observe finally that it follows from Theorem~\ref{main} that if $\dim X=2$
and $X$ is not a linear subspace, one can put $d_0=2$ in Corollary~6.4
from~\cite[Expos\'e XVIII]{SGA7.2}. Indeed, if $d\ge 2$ and $X$ is a
surface satisfying one of the conditions~(ii)--(iv) of the theorem,
then $v_d(X)$ does not satisfy any of them.

\bibliographystyle{amsalpha}

\bibliography{monodromy}

\end{document}